\newtheorem{Result}{Result}
\newtheorem{Thm}  [Result]{Theorem}
\newtheorem{Prop} [Result]{Proposition}
\newtheorem{Lemma}[Result]{Lemma}
\def\A {{\mathcal{A}}}
\def\LL{{\mathcal{L}}}
\def\SS{{\mathcal{SS}}}
\def\SC{{\mathcal{SC}}}
\def\In{{\mathcal{I}n}}
\def\map#1#2#3{#1\colon#2\,{\longrightarrow}\,#3}
\def\overlinet#1{\,\overline{\!#1}}
\begin{document}

\title[Perturbation classes for subprojective spaces]
    {The perturbation classes problem for subprojective and superprojective
        Banach~spaces}

\author{Manuel Gonz\'alez}
\author{Javier Pello}
\author{Margot Salas-Brown}

\begin{abstract}
We show that the perturbation class for the upper semi-Fredholm operators
between two Banach spaces $X$ and~$Y$ coincides with the strictly singular
operators when $X$ is subprojective and that the perturbation class for
the lower semi-Fredholm operators coincides with the strictly cosingular
operators when $Y$ is superprojective. Similar results were proved
in~\cite{gonzalez-martinez-salas} under stronger conditions for $X$ and~$Y$.
\end{abstract}

\maketitle

\begin{section}{Introduction}

Given a class $\A$ of operators between Banach spaces, its perturbation
class $P\A$ is defined as the class of all operators~$K$ such that
$T + K \in \A$ for every $T \in \A$.
This definition is not intrinsic, in the sense that determining whether
an operator belongs to $P\A$ involves studying its behaviour with
respect to every operator in~$\A$. In this regard, it is useful
to find an intrinsic characterisation for a perturbation class $P\A$,
as its existence means that membership of an operator 
can be checked based on properties of the operator alone.

For the upper semi-Fredholm operators~$\Phi_+$, it has been long known that
strictly singular operators belong to $P\Phi_+$ \cite[Theorem~5.2]{kato};
an operator is strictly singular if its restriction to a closed
infinite-dimen\-sional subspace is never an isomorphism.
Similarly, for the lower semi-Fredholm operators~$\Phi_-$, strictly
cosingular operators belong to~$P\Phi_-$ \cite[Corollary~1]{vladimirskii};
an operator~$T$ is strictly cosingular if its composition~$QT$ with the
quotient operator~$Q$ of a closed infinite-codimensional subspace is never
a surjection.
The perturbation classes problem for the semi-Fredholm operators is the
question of whether these pairs of classes ($P\Phi_+$ and $\SS$; $P\Phi_-$
and $\SC$) coincide \cite[page~74]{gohberg-markus-feldman}
\cite[26.6.12]{pietsch} \cite[Section~3]{tylli}.
This question remained open for a time, but was eventually proved to have
a negative answer in general: there exists a separable, reflexive Banach
space~$Z$ such that $P\Phi_+(Z) \ne \SS(Z)$ and $P\Phi_-(Z^*) \ne \SC(Z^*)$
\cite{gonzalez}.

However, it is still interesting to find pairs of spaces for which the
answer to the perturbation classes problem is positive, as it means that,
at least for them, the relevant components of $P\Phi_+$ and $P\Phi_-$ do
admit an intrinsic characterisation. There are several known such cases,
including some classical results \cite{lebow-schechter} \cite{weis}.

In Theorems \ref{main+} and~\ref{main-},
we prove that the perturbation classes problem for
$\Phi_+(X,Y)$ has a positive answer when $X$ is subprojective and similarly
that the perturbation classes problem for $\Phi_-(X,Y)$ has a positive answer
when $Y$ is superprojective.
A Banach space~$X$ is subprojective if every closed infinite-dimensional
subspace of~$X$ contains an infinite-dimensional subspace complemented
in~$X$; a Banach space~$X$ is superprojective if every closed
infinite-codimensional subspace of~$X$ is contained in an
infinite-codimensional subspace complemented in~$X$. Subprojective and
superprojective spaces were introduced by Whitley to study strictly
singular and strictly cosingular operators \cite{whitley};
see~\cite{gonzalez-martinez-salas} for a fairly complete list of examples
known at the time and \cite{oikhberg-spinu} \cite{gonzalez-pello}
and \cite{galego-gonzalez-pello} for more recent discoveries.

Theorems \ref{main+} and~\ref{main-} improve on the following,
previously known results.

\begin{Thm}
\cite{lebow-schechter}
\cite{aiena-gonzalez-2002}
\label{prev+}
Let $X$ and $Y$ be Banach spaces such that $\Phi_+(X,Y)$ is not empty and
$Y$ is subprojective. Then $P\Phi_+(X,Y) = \SS(X,Y)$.
\end{Thm}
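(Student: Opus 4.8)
The plan is to prove the two inclusions of $P\Phi_+(X,Y)=\SS(X,Y)$ separately. The inclusion $\SS(X,Y)\subseteq P\Phi_+(X,Y)$ is the classical fact of Kato recalled in the Introduction and uses nothing about $Y$; since $\Phi_+(X,Y)\neq\emptyset$, only $P\Phi_+(X,Y)\subseteq\SS(X,Y)$ needs an argument, and I would prove its contrapositive: given an operator $K\colon X\to Y$ that is \emph{not} strictly singular, I will produce $T\in\Phi_+(X,Y)$ with $T+K\notin\Phi_+(X,Y)$, so that $K\notin P\Phi_+(X,Y)$.

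So let $K|_M$ be an isomorphism for some closed infinite-dimensional $M\subseteq X$, and fix $T_0\in\Phi_+(X,Y)$, bounded below on a finite-codimensional subspace $Z$; replacing $M$ by $M\cap Z$ I may also assume $T_0|_M$ is an isomorphism. This is where subprojectivity of $Y$ enters: the closed infinite-dimensional subspace $K(M)$ contains an infinite-dimensional subspace $N$ complemented in $Y$, say by a bounded projection $P$. Put $M_0:=(K|_M)^{-1}(N)$ and $K_0:=K|_{M_0}\colon M_0\to N$. Since $PK$ coincides with $K_0$ on $M_0$, it is an isomorphism of $M_0$ onto $N$, and therefore $X=M_0\oplus\ker(PK)$; in particular $M_0$ is complemented in $X$. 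Fix a complement $V$ of $M_0$ in $X$ and write $Y=N\oplus N'$ with $N'=\ker P$.

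The operators I would try are $T:=(-K_0)\pi_{M_0}+W\pi_V$, where $\pi_{M_0},\pi_V$ are the projections of $X$ onto $M_0$ and $V$ and $W\colon V\to N'$ is still to be chosen. Because $Km=K_0m$ for $m\in M_0$, the operator $T+K$ vanishes on the infinite-dimensional subspace $M_0$, so $T+K\notin\Phi_+(X,Y)$ whatever $W$ is. On the other hand, using the decompositions $X=M_0\oplus V$ and $Y=N\oplus N'$ one checks that $\ker T=\{0\}\oplus\ker W$ and $T(X)=N\oplus W(V)$, so $T\in\Phi_+(X,Y)$ precisely when $W\in\Phi_+(V,N')$. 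Thus everything is reduced to finding one upper semi-Fredholm operator from $V$ to $N'$.

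Such a $W$ should come from $T_0$ again: $T_0|_V$ lies in $\Phi_+(V,Y)$, and following it with the quotient map $Y\to Y/N\cong N'$ gives an operator in $\Phi_+(V,N')$ as soon as $N\cap T_0(X)$ is finite-dimensional. Securing this is the heart of the proof, and it is exactly the point at which $M$ and the complemented subspace $N\subseteq K(M)$ must be chosen compatibly with $T_0$ — intuitively, so that $T_0$ carries a complement of $M_0$ into a complement of $N$ — again using subprojectivity of $Y$ (applied, if necessary, to a well-chosen subspace of $K(M)$ lying transversally to $T_0(X)$, or after corestricting $Y$ to its closed, still subprojective, subspace $T_0(X)$, which still receives an upper semi-Fredholm operator from $X$). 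I expect this compatible choice to be the main obstacle; once it is made one obtains $W\in\Phi_+(V,N')$, hence $T\in\Phi_+(X,Y)$ with $T+K\notin\Phi_+(X,Y)$, completing the argument.
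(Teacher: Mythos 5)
Your reduction is set up correctly as far as it goes: with $X = M_0 \oplus V$ and $Y = N \oplus N'$, the operator $T = (-K_0)\pi_{M_0} + W\pi_V$ does make $T+K$ vanish on the infinite-dimensional subspace $M_0$, and $T\in\Phi_+(X,Y)$ if and only if $W\in\Phi_+(V,N')$. But the argument stops exactly where the real difficulty begins, and the one concrete recipe you offer for $W$ does not work as stated. First, $Q_N T_0|_V\in\Phi_+$ requires more than $N\cap T_0(X)$ being finite-dimensional: even when $N\cap T_0(V)=\{0\}$, the range $Q_N\bigl(T_0(V)\bigr)$ need not be closed unless $N+T_0(V)$ is closed, and a sum of two closed subspaces with trivial intersection can fail to be closed. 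Second, and more seriously, there is no reason why $N$ can be chosen inside $K(M)$, complemented in $Y$, \emph{and} transversal to $T_0(X)$: if $K(M)\cap T_0(X)$ is infinite-dimensional, every infinite-dimensional subspace $N$ of $K(M)$ may meet $T_0(X)$ in an infinite-dimensional subspace, and then $Q_NT_0|_V$ has infinite-dimensional kernel for every choice of complement $V$. These obstructions are precisely the cases (b) and (c) that the proof of Theorem~\ref{main+} in this paper must treat separately, and case (c) there is resolved only by invoking a nontrivial compact-perturbation result from \cite{gonzalez-martinez-salas}. So the ``compatible choice'' you defer is not a routine adjustment but the actual content of the theorem; as written, the proof has a genuine gap.

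For comparison, the paper proves Theorem~\ref{prev+} by an entirely soft route with no construction of $T$ at all: $\Phi_+(X,Y)\ne\emptyset$ gives $\SS(X,Y)\subseteq P\Phi_+(X,Y)\subseteq\In(X,Y)$, the inessential operators, and subprojectivity of~$Y$ gives $\In(X,Y)=\SS(X,Y)$. If you want to see where subprojectivity really acts, the statement worth proving directly is the second of these: with $P$, $N$ and $M_0$ as in your notation, the operator $A=(PK|_{M_0})^{-1}P\in\LL(Y,X)$ satisfies $AK|_{M_0}=I_{M_0}$, so $I_X-AK$ has infinite-dimensional kernel and $K$ is not inessential. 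Combined with $P\Phi_+(X,Y)\subseteq\In(X,Y)$, this sidesteps the transversality problem entirely.
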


\begin{proof}
Note that $\Phi_+(X,Y) \ne \emptyset$ implies that $\SS(X,Y) \subseteq
P\Phi_+(X,Y) \subseteq \In(X,Y)$ \cite[Theorem~3.6]{aiena-gonzalez-2002}
and that $Y$ subprojective implies that\break $\SS(X,Y) = \In(X,Y)$
\cite[Theorems 4.3 and~4.4]{aiena-gonzalez-1998}.
\end{proof}

\begin{Thm}
\cite{lebow-schechter}
\cite{aiena-gonzalez-2002}
\label{prev-}
Let $X$ and $Y$ be Banach spaces such that $\Phi_-(X,Y)$ is not empty and
$X$ is superprojective. Then $P\Phi_-(X,Y) = \SC(X,Y)$.
\end{Thm}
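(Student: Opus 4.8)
The plan is to prove Theorem~\ref{prev-} as the lower semi-Fredholm mirror image of Theorem~\ref{prev+}, interchanging the roles of domain and range and replacing ``subprojective'' by ``superprojective''. Concretely, I would establish the chain
\[
  \SC(X,Y)\;\subseteq\;P\Phi_-(X,Y)\;\subseteq\;\In(X,Y)\;=\;\SC(X,Y),
\]
after which equality holds throughout and, in particular, $P\Phi_-(X,Y)=\SC(X,Y)$.

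The first inclusion is Vladimirskii's theorem \cite[Corollary~1]{vladimirskii}, already recalled above, and imposes no condition on $X$ or~$Y$. The second inclusion is the lower semi-Fredholm counterpart of \cite[Theorem~3.6]{aiena-gonzalez-2002}: if $K\in P\Phi_-(X,Y)$, then $S+\lambda K\in\Phi_-(X,Y)$ for every $S\in\Phi_-(X,Y)$ and every scalar~$\lambda$, and this stability of $\Phi_-$ along the affine lines through~$S$ is what forces $K$ to be inessential. The hypothesis $\Phi_-(X,Y)\neq\emptyset$ is needed precisely here, to have an operator~$S$ to perturb.

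The closing equality $\In(X,Y)=\SC(X,Y)$ is the strictly cosingular analogue of \cite[Theorems 4.3 and~4.4]{aiena-gonzalez-1998}, ultimately resting on Whitley's analysis \cite{whitley}. The inclusion $\SC(X,Y)\subseteq\In(X,Y)$ is the routine direction; in the reverse one, superprojectivity of the \emph{domain}~$X$ does the work that subprojectivity of the range did in Theorem~\ref{prev+}. Indeed, were some $K\in\In(X,Y)$ not strictly cosingular, there would be a closed infinite-codimensional subspace $N\subseteq Y$ with $Q_NK$ a surjection onto $Y/N$; then $X_0=K^{-1}(N)$ is a closed subspace of~$X$ with $X/X_0\cong Y/N$ infinite-dimensional, so superprojectivity provides a complemented infinite-codimensional subspace $X_0\subseteq X_0'\subseteq X$. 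Writing $X=X_0'\oplus W$ with $W$ infinite-dimensional, one checks that $Q_NK|_W$, hence also $K|_W$, is bounded below and that $K(W)$ is complemented in~$Y$; thus $K$ restricts to an isomorphism of~$W$ onto a complemented subspace of~$Y$, which is incompatible with $K\in\In(X,Y)$.

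I expect the only genuinely non-routine ingredient to be this last equality: the other two links are either soft or quoted verbatim, whereas $\In(X,Y)=\SC(X,Y)$ under superprojectivity of~$X$ is a real theorem. The point to keep straight is that it is superprojectivity of the domain, not of the range, that is required --- the same asymmetry that separates strict singularity from strict cosingularity --- which is why the hypotheses of Theorems \ref{prev+} and~\ref{prev-} are imposed on different spaces. One might instead hope to deduce Theorem~\ref{prev-} from Theorem~\ref{prev+} by passing to adjoints, but that would only settle the case in which $X^*$ is subprojective, which is strictly stronger than $X$ being superprojective \cite{whitley}; the direct dualisation above avoids this loss.
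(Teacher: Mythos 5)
Your proof is correct and follows exactly the same route as the paper's: the chain $\SC(X,Y)\subseteq P\Phi_-(X,Y)\subseteq\In(X,Y)=\SC(X,Y)$, with the first two inclusions coming from \cite[Theorem~3.6]{aiena-gonzalez-2002} (where $\Phi_-(X,Y)\neq\emptyset$ is needed) and the closing equality from \cite[Theorems 4.3 and~4.4]{aiena-gonzalez-1998} under superprojectivity of the domain. The paper disposes of the theorem by citation alone, whereas you additionally sketch the Whitley-type argument for $\In(X,Y)=\SC(X,Y)$; that sketch is sound.
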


\begin{proof}
Note that $\Phi_-(X,Y) \ne \emptyset$ implies that $\SC(X,Y) \subseteq
P\Phi_-(X,Y) \subseteq \In(X,Y)$ \cite[Theorem~3.6]{aiena-gonzalez-2002}
and that $X$ superprojective implies that $\SC(X,Y) = \In(X,Y)$
\cite[Theorems 4.3 and~4.4]{aiena-gonzalez-1998}.
\end{proof}

Theorem~\ref{main+} is stronger than Theorem~\ref{prev+} because the
hypotheses in Theorem~\ref{prev+} ($\Phi_+(X,Y)\ne\emptyset$ and
$Y$ subprojective) imply that $X$ itself is subprojective, as they mean that
a finite-codimensional subspace of~$X$ is isomorphic to a subspace of~$Y$,
and closed subspaces of subprojective spaces are subprojective too.
Similarly, Theorem~\ref{main-} is stronger than Theorem~\ref{prev-} because
the hypotheses in Theorem~\ref{prev-} ($\Phi_-(X,Y)\ne\emptyset$
and $X$ superprojective) imply that $Y$ itself is superprojective, as they
mean that a finite-codimensional subspace of~$Y$ is isomorphic to a quotient
of~$X$, and quotients of superprojective spaces are superprojective too.

Theorems \ref{main+} and~\ref{main-} also improve on similar results
obtained for spaces satisfying the formally stronger conditions of
strong subprojectivity and strong superprojectivity
introduced in~\cite{gonzalez-martinez-salas}.
A Banach space~$X$ is strongly subprojective if every closed
infinite-dimensional subspace of~$X$ contains an infinite-dimensional
subspace complemented in~$X$ with complement isomorphic to~$X$;
a Banach space~$X$ is strongly superprojective if every closed
infinite-codimensional subspace of~$X$ is contained in an
infinite-codimensional subspace complemented in~$X$ that is isomorphic
to~$X$.
Clearly, a strongly subprojective space~$X$ is subprojective, although
the question remains open as to whether there are subprojective spaces
that are not strongly subprojective, and likewise for the classes of
superprojective and strongly superprojective spaces.

\begin{Thm}
\cite[Theorems 2.6 and~3.7]{gonzalez-martinez-salas}
Let $X$ and $Y$ be Banach spaces.
\begin{itemize}
\item[(a)] If $X$ is strongly subprojective and $\Phi_+(X,Y)$ is not empty,
then $P\Phi_+(X,Y) = \SS(X,Y)$.
\item[(b)] If $Y$ is strongly superprojective and $\Phi_-(X,Y)$ is not empty,
then $P\Phi_-(X,Y) = \SC(X,Y)$.
\end{itemize}
\end{Thm}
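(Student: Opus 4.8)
Since the inclusions $\SS(X,Y)\subseteq P\Phi_+(X,Y)$ and $\SC(X,Y)\subseteq P\Phi_-(X,Y)$ are Kato's \cite[Theorem~5.2]{kato} and Vladimirskii's \cite[Corollary~1]{vladimirskii} theorems recalled in the introduction, I would only have to prove the reverse inclusions, and I would argue~(a) by contradiction (part~(b) being dual). Let $K\in P\Phi_+(X,Y)$; as $\Phi_+(X,Y)\neq\emptyset$, $K$ is improjective, $K\in\In(X,Y)$ \cite[Theorem~3.6]{aiena-gonzalez-2002}. Assuming $K\notin\SS(X,Y)$, there is a closed infinite-dimensional $M\subseteq X$ on which $K$ is an isomorphism, and by strong subprojectivity of~$X$ I would find a closed infinite-dimensional $N\subseteq M$ with $X=N\oplus X'$ and $X'\cong X$. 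Then $K|_N$ is an isomorphism onto the closed infinite-dimensional subspace $K(N)$, which, since $K\in\In(X,Y)$, is uncomplemented in~$Y$ (as is every infinite-dimensional subspace of it) and hence of infinite codimension in~$Y$.

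Let $P$ be the projection of~$X$ onto~$N$ along~$X'$. Using $X'\cong X$ and $\Phi_+(X,Y)\neq\emptyset$ I would pick $B\in\Phi_+(X',Y)$ and set $T:=B(I-P)-KP\colon X\to Y$. A direct computation gives $(T+K)x=(B+K|_{X'})\bigl((I-P)x\bigr)$, so $N\subseteq\ker(T+K)$ and $T+K\notin\Phi_+(X,Y)$ no matter what $B$ is. On the other hand $\ker T$ is linearly isomorphic to $B^{-1}(K(N))$ and $T(X)=B(X')+K(N)$; hence $T\in\Phi_+(X,Y)$ provided that $B(X')\cap K(N)$ is finite-dimensional and $B(X')+K(N)$ is closed, equivalently that $B$ composed with the quotient map $Y\to Y/K(N)$ is upper semi-Fredholm. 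If $B$ can be chosen in this way, $T$ witnesses $K\notin P\Phi_+(X,Y)$, contradicting the hypothesis, so $K\in\SS(X,Y)$.

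The construction of this $B$ is the step I expect to be the main obstacle: one has to fit a $\Phi_+$-image of $X'\cong X$ into~$Y$ whose range meets $K(N)$ in a finite-dimensional subspace and spans a closed subspace together with it. The assets are the isomorphism $X'\cong X$ --- which puts the whole of $\Phi_+(X,Y)$ at one's disposal for~$B$, and which also allows $N$ to be shrunk inside~$M$ to a subspace still complemented in~$X$ with complement isomorphic to~$X$ (``general position'' of $K(N)$ relative to $B(X')$ being inherited by smaller $N$) --- together with the fact that $K(N)$ is uncomplemented and of infinite codimension in~$Y$; making these cooperate is where the real work lies.

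Part~(b) I would treat by the dual argument, phrased through quotients. If $K\in P\Phi_-(X,Y)\setminus\SC(X,Y)$, then $K\in\In(X,Y)$ and there is a closed infinite-codimensional $V\subseteq Y$ with $Q_VK$ surjective; strong superprojectivity of~$Y$ gives an infinite-codimensional $W\supseteq V$ that is complemented in~$Y$ with $W\cong Y$, and since $V\subseteq W$ the map $Q_W$ factors through $Q_V$, so $Q_WK$ is still surjective. Writing $Y=W\oplus W'$ with $W'$ infinite-dimensional and $\pi$ the projection of~$Y$ onto~$W'$, the operator $\pi K\colon X\to W'$ is surjective, so $\ker(\pi K)$ has infinite codimension in~$X$. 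I would then look for $T\in\Phi_-(X,Y)$ with $\pi T=-\pi K$: this forces $(T+K)(X)\subseteq W$, which has infinite codimension, so $T+K\notin\Phi_-(X,Y)$, contradicting $K\in P\Phi_-(X,Y)$. Such a $T$ has its $W'$-component prescribed and its $W$-component free, and the obstacle --- the mirror image of the one in~(a) --- is to choose that free component in $\Phi_-(X,W)$ (using $W\cong Y$, which makes $\Phi_-(X,W)$ as rich as $\Phi_-(X,Y)$) with its kernel positioned so that $T$ becomes lower semi-Fredholm.
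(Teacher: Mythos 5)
There is a genuine gap, and you have named it yourself. Your reduction is fine as far as it goes: $T=B(I-P)-KP$ does satisfy $N\subseteq N(T+K)$ for every $B$, and you have correctly isolated what $B\in\Phi_+(X',Y)$ must satisfy for $T\in\Phi_+(X,Y)$, namely $B(X')\cap K(N)$ finite-dimensional and $B(X')+K(N)$ closed. But the step you defer --- actually producing such a $B$ --- is not a loose end to be tightened later; it is the entire content of the theorem, and no general-position argument supplies it. Knowing that $K(N)$ is uncomplemented and infinite-codimensional does not let you steer the range of an embedding of $X'\cong X$ away from it: the intersection with the range of every available candidate may be infinite-dimensional, and even when it is finite-dimensional the sum may fail to be closed. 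Neither obstruction can be removed by shrinking $N$ or by perturbing $B$ within $\Phi_+$.

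The paper does not reprove this statement (it cites \cite{gonzalez-martinez-salas}), but its Theorems \ref{main+} and~\ref{main-} strengthen it, and their proofs show how the obstruction is actually handled: fix one operator in $\Phi_+(X,Y)$ and let $L$ be its (essential) range, then trichotomise on the position of $K(U)$ relative to $L$. Only the case ``$K(U)\cap L$ finite-dimensional and $K(U)+L$ closed'' is resolved by a direct construction of the kind you propose (with the second summand an embedding of the complement into $L$). When $K(U)\cap L$ is infinite-dimensional, no good $B$ is constructed at all; instead one arranges $K(U)\subseteq L$ and applies Theorem~\ref{prev+} to $K$ viewed as a non-strictly-singular operator into the \emph{subprojective} space $L$, i.e.\ one changes strategy rather than choosing $B$ more cleverly. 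When $K(U)+L$ is not closed, a separate and non-trivial compact-perturbation lemma (\cite[Theorem~2.6]{gonzalez-martinez-salas}) is needed to move into the previous case. Your plan covers only the first of these three situations, and the mirror-image gap occurs in your part~(b), where the ``free $W$-component'' of $T$ is likewise asserted rather than constructed. Until those two missing cases are supplied, the argument does not prove the theorem.
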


We will use standard notation.
Given a (bounded, linear) operator $\map TXY$, 
$N(T)$ and~$R(T)$ will denote the kernel and the range of~$T$, respectively.
$\LL(X,Y)$ will stand for the set of all operators from~$X$ to~$Y$; if $\A$ is
a class of operators, then $\A(X,Y) = \A \cap \LL(X,Y)$ and $\A(X) = \A(X,X)$.
If $N$ is a closed subspace of~$X$, we will denote the induced natural
quotient operator by $\map{Q_N}{X}{X/N}$.

\end{section}

\begin{section}{Results}

We begin with a simple result that can be found in \cite[Theorem~7.21]{aiena}.
We include a proof for the convenience of the reader.

\begin{Prop}
\label{aux}
\leavevmode 
\begin{itemize}
\item[(a)] If $K\in P\Phi_+(X,Y)$ and $A\in\LL(X)$, then $KA\in P\Phi_+(X,Y)$.
\item[(b)] If $K\in P\Phi_-(X,Y)$ and $B\in\LL(Y)$, then $BK\in P\Phi_-(X,Y)$.
\end{itemize}
\end{Prop}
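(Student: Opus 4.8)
The plan is to prove part~(a) in detail; part~(b) is then obtained by the analogous (mirror) argument, with closed infinite-codimensional subspaces of~$Y$, quotient maps, and left multiplication playing the roles of subspaces of~$X$, restrictions, and right multiplication. It is worth stressing that~(b) is \emph{not} deduced from~(a) by taking adjoints: passing to adjoints only gives $K^*\in P\Phi_+(Y^*,X^*)\Rightarrow K\in P\Phi_-(X,Y)$, not the converse, because not every operator in $\Phi_+(Y^*,X^*)$ is an adjoint.

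So consider~(a). If $\Phi_+(X,Y)=\emptyset$ then $P\Phi_+(X,Y)=\LL(X,Y)$ and there is nothing to prove, hence we may assume $\Phi_+(X,Y)\neq\emptyset$; then $\SS(X,Y)\subseteq P\Phi_+(X,Y)$ by Kato's theorem and $P\Phi_+(X,Y)$ is a closed linear subspace of~$\LL(X,Y)$. Fix $K\in P\Phi_+(X,Y)$ and $A\in\LL(X)$. Two special cases are immediate and set the pattern: if $K$ is strictly singular then so is $KA$, so $KA\in\SS(X,Y)\subseteq P\Phi_+(X,Y)$; and if $A$ is invertible then, for any $T\in\Phi_+(X,Y)$, we have $T+KA=(TA^{-1}+K)A$ with $TA^{-1}\in\Phi_+(X,Y)$, hence $TA^{-1}+K\in\Phi_+(X,Y)$, hence $T+KA\in\Phi_+(X,Y)$. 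For the general case we argue by contradiction. Suppose $T\in\Phi_+(X,Y)$ but $T+KA\notin\Phi_+(X,Y)$; using the characterisation that an operator fails to be upper semi-Fredholm exactly when its restriction to some closed infinite-dimensional subspace is compact, pick a closed infinite-dimensional $M\subseteq X$ with $(T+KA)|_M=T|_M+K(A|_M)$ compact.

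The first step is that $A|_M\in\Phi_+(M,X)$. Otherwise there is a closed infinite-dimensional $M'\subseteq M$ with $A|_{M'}$ compact; then $K(A|_{M'})$ is compact, so $T|_{M'}=(T+KA)|_{M'}-K(A|_{M'})$ is compact, which is impossible because the restriction of an upper semi-Fredholm operator to a closed infinite-dimensional subspace is again upper semi-Fredholm, while no upper semi-Fredholm operator with infinite-dimensional domain is compact. Thus $A|_M\in\Phi_+(M,X)$; pass to a closed infinite-dimensional $M_1\subseteq M$ on which $A$ is bounded below and put $N=A(M_1)$, a closed infinite-dimensional subspace of~$X$, so that $\alpha:=A|_{M_1}\colon M_1\to N$ is an isomorphism. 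Reading the relation ``$(T+KA)|_{M_1}$ is compact'' through~$\alpha^{-1}$ shows that $S_0:=-T|_{M_1}\circ\alpha^{-1}\colon N\to Y$ differs from $K|_N$ by a compact operator; moreover $S_0\in\Phi_+(N,Y)$, being the composite of the isomorphism $\alpha^{-1}$ with the upper semi-Fredholm operator $T|_{M_1}$.

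The crux, and the step I expect to be the main obstacle, is to extend $S_0$ to an operator $S\in\Phi_+(X,Y)$ on all of~$X$ that still agrees with $K$ modulo a compact operator on some closed infinite-dimensional subspace: then $S-K$ restricted to that subspace is compact, so $S-K\notin\Phi_+(X,Y)$, which contradicts $S\in\Phi_+(X,Y)$ and $-K\in P\Phi_+(X,Y)$. This is delicate because $N$ need not be complemented in~$X$ and $Y$ need not be injective, so $S_0$ need not even have a bounded extension to~$X$, and an extension that does exist need not be upper semi-Fredholm. Getting around this requires using \emph{both} the standing hypothesis $\Phi_+(X,Y)\neq\emptyset$---which gives a finite-codimensional subspace of~$X$ embedding into~$Y$ modulo a compact operator, hence enough room in~$Y$ off $\overline{S_0(N)}$---\emph{and} the freedom to replace $N$ by a suitably placed closed infinite-dimensional subspace $N'\subseteq N$, with $M_1$ replaced by $\alpha^{-1}(N')$, before performing the extension. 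Once such an $S$ is produced the contradiction closes, giving $T+KA\in\Phi_+(X,Y)$ for all $T\in\Phi_+(X,Y)$, i.e.\ $KA\in P\Phi_+(X,Y)$; part~(b) then follows by the mirror argument.
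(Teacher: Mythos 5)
Your argument for invertible $A$ is correct and matches the paper, but the general case is where your proof breaks down, and you have in effect flagged the break yourself: the step ``extend $S_0\colon N\to Y$ to some $S\in\Phi_+(X,Y)$ that still agrees with $K$ modulo a compact operator on a closed infinite-dimensional subspace'' is asserted to be achievable but never carried out. This is not a routine missing detail. Since $N$ need not be complemented in~$X$ and $Y$ need not be injective, producing such an $S$ is essentially the same difficulty as the main theorems of the paper (compare case~(a) of Theorem~\ref{main+}, where it is done using subprojectivity of~$X$), and the Proposition is supposed to hold with \emph{no} hypotheses on $X$ and~$Y$ beyond $\Phi_+(X,Y)\ne\emptyset$. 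The vague appeal to ``enough room in~$Y$'' and to shrinking $N$ does not yield a construction, so as written the proof of the general case is incomplete, and I see no way to complete it along these lines without extra assumptions.

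The paper's proof avoids all of this with one observation you missed: every $A\in\LL(X)$ is the sum of two bijective operators, e.g.\ $A=\lambda I+(A-\lambda I)$ with $\lambda>\|A\|$, since then both $\lambda I$ and $A-\lambda I=-(\lambda I-A)$ are invertible. Because $P\Phi_+(X,Y)$ is closed under addition (if $K_1,K_2\in P\Phi_+(X,Y)$ and $T\in\Phi_+(X,Y)$, then $T+K_1\in\Phi_+(X,Y)$ and hence $(T+K_1)+K_2\in\Phi_+(X,Y)$), the identity $KA=K(\lambda I)+K(A-\lambda I)$ reduces the general case to the invertible case you already proved. Your whole contradiction machinery --- the compact-restriction characterisation of non-membership in $\Phi_+$, the passage to $M_1$ and $\alpha$, and the extension problem --- is unnecessary. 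I would also note that your opening special case ($K$ strictly singular) contributes nothing, since it is subsumed by the general statement, and that part~(b) should likewise be handled by writing $B$ as a sum of two bijective operators on~$Y$ rather than by the mirror of your subspace argument.
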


\begin{proof}
(a)~If $A$ is bijective, let $T\in\Phi_+(X,Y)$; then $TA^{-1}\in\Phi_+(X,Y)$,
so $T + KA = (TA^{-1} + K) A \in \Phi_+(X,Y)$, hence $KA \in P\Phi_+(X,Y)$.
For the general case, it is enough to note that $A$ can be written as the
sum of two bijective operators.

The proof of~(b) is similar.
\end{proof}

The next result was already known for $X$ strongly
subprojective~\cite{gonzalez-martinez-salas}.

\begin{Thm}
\label{main+}
Let $X$ and $Y$ be Banach spaces such that $\Phi_+(X,Y)$ is not empty and
$X$ is subprojective. Then $P\Phi_+(X,Y) = \SS(X,Y)$.
\end{Thm}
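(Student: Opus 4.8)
I would prove the two inclusions of $P\Phi_+(X,Y)=\SS(X,Y)$ separately. The inclusion $\SS(X,Y)\subseteq P\Phi_+(X,Y)$ is Kato's classical result \cite[Theorem~5.2]{kato} and needs only $\Phi_+(X,Y)\neq\emptyset$, so the real work is the reverse inclusion $P\Phi_+(X,Y)\subseteq\SS(X,Y)$. For this I would argue by contradiction: assume $K\in P\Phi_+(X,Y)$ is not strictly singular, so $K$ is an isomorphism on some closed infinite-dimensional $M\subseteq X$. If $M$ has finite codimension in $X$ then $K$ has finite-dimensional kernel and closed range, i.e.\ $K\in\Phi_+(X,Y)$; but then $-K\in\Phi_+(X,Y)$ while $K+(-K)=0\notin\Phi_+(X,Y)$ (since $X$ is infinite-dimensional), contradicting $K\in P\Phi_+(X,Y)$. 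Hence one may assume $M$ has infinite codimension.

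Next I would invoke subprojectivity: $M$ contains a closed infinite-dimensional subspace $M_0$ that is complemented in $X$, and, $M$ having infinite codimension, so does $M_0$. Fixing $S_0\in\Phi_+(X,Y)$ together with a closed finite-codimensional $V_0\subseteq X$ on which $S_0$ is an isomorphism, I would shrink $M_0$ once more — again by subprojectivity, inside $M_0\cap V_0$ — to a closed infinite-dimensional subspace still complemented in $X$, still of infinite codimension, and still contained in $M$, so that now both $S_0|_{M_0}$ and $K|_{M_0}$ are isomorphisms. Writing $X=M_0\oplus N$ with projection $\pi\colon X\to M_0$ along the (infinite-dimensional) $N$, Proposition \ref{aux}(a) with $A=\pi$ gives $K\pi\in P\Phi_+(X,Y)$; since $K\pi$ agrees with $K$ on $M_0$ (so is still an isomorphism there) and vanishes on $N$, I would replace $K$ by $K\pi$ and thus assume $K|_N=0$, whence $R(K)=K(M_0)$.

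Finally I would consider the operator $T:=S_0(I-\pi)-K\pi$, which acts as $-K$ on $M_0$ and as $S_0$ on $N$. Because $K|_N=0$ one has $T+K=S_0(I-\pi)$, which vanishes on the infinite-dimensional subspace $M_0$ and so is not in $\Phi_+(X,Y)$; it therefore suffices to prove $T\in\Phi_+(X,Y)$ in order to contradict $K\in P\Phi_+(X,Y)$. From the block form of $T$ one computes $N(T)\cong\{\,n\in N:S_0n\in K(M_0)\,\}$ and $R(T)=K(M_0)+S_0(N)$, so the point is to have chosen $M_0$ so that $S_0(N)\cap K(M_0)$ is finite-dimensional and $K(M_0)+S_0(N)$ is closed — in other words, so that $K(M_0)$ is in general position relative to the range of $S_0$ in $Y$. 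Producing such an $M_0$ is the heart of the theorem and the main obstacle: it forces one to go back and select $M_0$ inside $M$ much more carefully — via a gliding-hump type construction, together with one further use of subprojectivity to keep $M_0$ complemented in $X$ — and it is here that the inessentiality of $K$ (a consequence of $K\in P\Phi_+(X,Y)$ \cite[Theorem~3.6]{aiena-gonzalez-2002}), hence the uncomplementedness of $K(M_0)$ in $Y$, must be used to exclude the configuration that would otherwise obstruct this general position. Granting such an $M_0$, $N(T)$ collapses to $N(S_0)\cap N$, which is finite-dimensional, and $R(T)$ is closed, so $T\in\Phi_+(X,Y)$, and the proof is complete.
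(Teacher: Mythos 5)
Your framework (the reduction to $K$ an isomorphism on an infinite-dimensional complemented $M_0$, the replacement of $K$ by $K\pi$ via Proposition~\ref{aux}, and the operator $T=S_0(I-\pi)-K\pi$) reproduces the paper's treatment of the generic case, but there is a genuine gap precisely where you write ``Granting such an $M_0$'': you never produce an $M_0$ for which $K(M_0)\cap S_0(N)$ is finite-dimensional and $K(M_0)+S_0(N)$ is closed, and this is the entire content of the theorem. Worse, the route you sketch for filling the gap cannot work in general, because such general position is sometimes unattainable by any selection of $M_0$ inside $M$: if, for instance, $K(M)$ is wholly contained in $R(S_0)$ (in the paper's notation, in the copy $L\subseteq Y$ of a finite-codimensional subspace of $X$), then $K(M_0)\cap L$ is infinite-dimensional for \emph{every} infinite-dimensional $M_0\subseteq M$, so no gliding-hump argument can shrink the intersection to finite dimension. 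Your appeal to the inessentiality of $K$ and the ``uncomplementedness of $K(M_0)$'' does not exclude this configuration; an inessential operator can perfectly well map an infinite-dimensional subspace isomorphically into $L$.

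The paper does not try to force general position; it splits into three cases according to the position of $K(U)$ relative to $L$. When $K(U)\cap L$ is finite-dimensional and $K(U)+L$ is closed, the argument is essentially yours and works. When $K(U)\cap L$ is infinite-dimensional, one passes to $U\cap K^{-1}(L)$ so that $K(U)\subseteq L$ and regards $KP$ ($P$ a projection onto $U$) as a non-strictly-singular operator into $L$; the key observation --- and the reason subprojectivity of $X$ rather than of $Y$ suffices --- is that $L$, being isomorphic to a subspace of the subprojective space $X$, is itself subprojective, so Theorem~\ref{prev+} applies to $\Phi_+(X,L)$ and gives $KP\notin P\Phi_+(X,L)$, hence $K\notin P\Phi_+(X,Y)$ by Proposition~\ref{aux}. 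When $K(U)\cap L$ is finite-dimensional but $K(U)+L$ is not closed, a compact perturbation $K_1$ makes $(K+K_1)(U)\cap L$ infinite-dimensional \cite[Theorem~2.6]{gonzalez-martinez-salas}, reducing to the previous case. These two cases are exactly what your outline leaves unresolved, and neither is a technicality: the second case is where the improvement from strong subprojectivity to subprojectivity actually happens.
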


\begin{proof}
Since $\Phi_+(X,Y)$ is not empty, $Y$ must contain some closed subspace~$L$
isomorphic to a finite-codimensional subspace of~$X$; in particular,
$L$ must be subprojective.

Let $K \in \LL(X,Y) \setminus \SS(X,Y)$; we have to show that $K \notin
P\Phi_+(X,Y)$. Since $K$ is not strictly singular, there exists a closed
infinite-dimensional subspace~$U$ of~$X$ such that $K|_U$ is an isomorphism.
Considering the relative positions of the subspaces $K(U)$ and~$L$ inside~$Y$,
three cases may happen:
\begin{itemize}
\item[(a)] $K(U) \cap L$ is finite-dimensional and $K(U) + L$ is closed;
\item[(b)] $K(U) \cap L$ is infinite-dimensional;
\item[(c)] $K(U) \cap L$ is finite-dimensional and $K(U) + L$ is not closed.
\end{itemize}

(a)~If $K(U) \cap L$ is finite-dimensional and $K(U) + L$ is closed, we can
assume that $K(U) \cap L = \{0\}$ by passing to a smaller~$U$ if necessary.
Since $X$ is subprojective, we may further assume that $U$ is complemented
in~$X$, so there exists a closed subspace~$M$ of~$X$ such that
$X = U \oplus M$ and, as $M$ is an infinite-codimensional subspace of~$X$
and $L$ is isomorphic to a finite-codimensional subspace of~$X$,
there exists an isomorphic embedding $\map SML$.
Define an operator $\map{T}{X = U\oplus M}{Y}$ as $T(u+m) = -K(u) + S(m)$,
where $u\in U$ and $m\in M$; then $T\in\Phi_+(X,Y)$ but $U \subseteq
N(T+K)$, so $T+K \notin \Phi_+$, which proves that $K \notin P\Phi_+(X,Y)$.

(b)~If $K(U) \cap L$ is infinite-dimensional, we can pass to
$U \cap K^{-1}(L)$ to assume that $K(U) \subseteq L$ and, since $X$
is subprojective, we may further assume that $U$ is complemented in~$X$,
so there exists a projection $\map PXX$ with range~$U$. Now, $KP$ can be
seen as an operator $\map{KP}XL$ that is not strictly singular, where
$\Phi_+(X,L)$ is not empty and $L$ is subprojective, so $KP \notin
P\Phi_+(X,L)$ by Theorem~\ref{prev+}. As such, $KP \notin P\Phi_+(X,Y)$
and $K \notin P\Phi_+(X,Y)$ by Proposition~\ref{aux}.

(c)~If $K(U) \cap L$ is finite-dimensional and $K(U) + L$ is not closed,
there exists a compact operator $\map{K_1}{X}{Y}$ such that $(K + K_1)(U)
\cap L$ is infinite-dimensional~\cite[Theorem~2.6]{gonzalez-martinez-salas},
and then it follows that $K + K_1 \notin P\Phi_+(X,Y)$ from case~(b) and
finally that $K \notin P\Phi_+(X,Y)$.
\end{proof}

Next we recall a technical lemma.

\begin{Lemma}
\cite[Lemma~3.5]{gonzalez-martinez-salas}
\label{aux-2}
Let $K\in\LL(X,Y)$ and let $Y_0$ be a closed subspace of~$Y$ such that
$Q_{Y_0}K$ is surjective. If $E$ is a closed subspace of~$X$ such that
$K^{-1}(Y_0) \subseteq E$, then $Y$ contains a closed subspace~$F$ such that
$Y_0 \subseteq F$ and $E = K^{-1}(F)$. Moreover, if $E$ is
infinite-codimensional in~$X$, then $F$ is infinite-codimensional in~$Y$.
\end{Lemma}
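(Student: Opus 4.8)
The plan is to construct $F$ explicitly as a pull‑back of a suitable subspace of $Y/Y_0$ along the quotient map $\map{Q_{Y_0}}{Y}{Y/Y_0}$. The key preliminary observation is that the operator $\map{Q_{Y_0}K}{X}{Y/Y_0}$ is surjective, with kernel exactly $K^{-1}(Y_0)$, so it factors as $Q_{Y_0}K = \widehat K \circ Q_{K^{-1}(Y_0)}$ for a continuous bijection $\map{\widehat K}{X/K^{-1}(Y_0)}{Y/Y_0}$, which is therefore an isomorphism by the open mapping theorem.

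Next I would bring in the hypothesis $K^{-1}(Y_0) \subseteq E$. Since $E$ and $K^{-1}(Y_0)$ are closed and the latter is contained in the former, $\widehat E := Q_{K^{-1}(Y_0)}(E)$ is a closed subspace of $X/K^{-1}(Y_0)$ and $Q_{K^{-1}(Y_0)}^{-1}(\widehat E) = E$. As $\widehat K$ is an isomorphism, $\widehat K(\widehat E)$ is then a closed subspace of $Y/Y_0$, and I would set $F := Q_{Y_0}^{-1}\!\bigl(\widehat K(\widehat E)\bigr)$; by construction this is a closed subspace of~$Y$ containing~$Y_0$. To check that $E = K^{-1}(F)$, one just chases the definitions: for $x \in X$ one has $Kx \in F$ if and only if $\widehat K\bigl(Q_{K^{-1}(Y_0)}x\bigr) = Q_{Y_0}Kx \in \widehat K(\widehat E)$, which by injectivity of $\widehat K$ amounts to $Q_{K^{-1}(Y_0)}x \in \widehat E$, i.e. to $x \in E$.

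For the final assertion, I would observe that $\widehat K$, being an isomorphism of $X/K^{-1}(Y_0)$ onto $Y/Y_0$ that carries $\widehat E$ onto $\widehat K(\widehat E)$, induces an isomorphism between the quotients $\bigl(X/K^{-1}(Y_0)\bigr)/\widehat E$ and $\bigl(Y/Y_0\bigr)/\widehat K(\widehat E)$. Since these quotients are canonically isomorphic to $X/E$ and $Y/F$ respectively, we obtain $X/E \cong Y/F$, so $F$ is infinite‑codimensional in~$Y$ as soon as $E$ is infinite‑codimensional in~$X$.

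I expect the only genuine subtlety to be the closedness of~$F$, and this is precisely where the two hypotheses are used together: surjectivity of $Q_{Y_0}K$ upgrades the induced map to an isomorphism, while the inclusion $K^{-1}(Y_0) \subseteq E$ ensures that $E$ descends to an honest closed subspace of $X/K^{-1}(Y_0)$ (rather than merely a dense one) whose image can then be pulled back. Everything else is routine diagram chasing with quotient maps.
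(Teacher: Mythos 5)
Your proof is correct. The paper does not actually prove this lemma---it is quoted verbatim from \cite[Lemma~3.5]{gonzalez-martinez-salas}---so there is no in-paper argument to compare against; your construction of $F$ as the pull-back $Q_{Y_0}^{-1}\bigl(\widehat K(\widehat E)\bigr)$ (equivalently, $F = K(E)+Y_0$) is the natural one, each step checks out, and the isomorphism $X/E \cong Y/F$ you obtain even sharpens the final assertion from ``infinite-codimensional'' to ``equal codimension.''
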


The next result was already known for $Y$ strongly
superprojective~\cite{gonzalez-martinez-salas}.

\begin{Thm}
\label{main-}
Let $X$ and $Y$ be Banach spaces such that $\Phi_-(X,Y)$ is not empty and
$Y$ is superprojective. Then $P\Phi_-(X,Y) = \SC(X,Y)$.
\end{Thm}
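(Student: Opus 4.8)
The plan is to dualise the argument used for Theorem~\ref{main+}, replacing subspaces by quotients, isomorphic embeddings by surjections, and kernels by ranges throughout. Since $\Phi_-(X,Y)$ is not empty, there is an operator $T_0\in\Phi_-(X,Y)$; then $Y/R(T_0)$ is finite-dimensional and $R(T_0)$ is a finite-codimensional subspace of~$Y$ that is a quotient of~$X$ (more precisely, of a finite-codimensional subspace of~$X$, or equivalently a closed subspace of~$Y$ onto which some operator from~$X$ maps, with $T_0$ itself essentially providing the surjection after adjusting for the finite-dimensional kernel). In particular this subspace, call it~$L$, is superprojective, being a finite-codimensional — hence complemented — subspace of the superprojective space~$Y$. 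We take $K\in\LL(X,Y)\setminus\SC(X,Y)$ and must show $K\notin P\Phi_-(X,Y)$; since $K$ is not strictly cosingular, there is a closed infinite-codimensional subspace~$Y_0$ of~$Y$ with $Q_{Y_0}K$ surjective.

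Next I would run the three-case analysis dual to the one in Theorem~\ref{main+}, this time according to the relative position of~$Y_0$ and the quotient structure coming from~$L$. Writing $N = K^{-1}(Y_0)$, which is closed and infinite-codimensional in~$X$ (it is the preimage of an infinite-codimensional subspace under an operator with dense range onto the quotient, so by a rank count its codimension is infinite — this needs a short justification using that $Q_{Y_0}K$ is onto), I would split cases on whether $Y_0 + L$ is closed with $Y_0\cap L$ finite-codimensional in a suitable sense, etc. In the main case, using that $Y$ is superprojective I enlarge~$Y_0$ so that it is complemented in~$Y$, say $Y = Y_0 \oplus W$ with $W$ infinite-dimensional; then I build $T\in\Phi_-(X,Y)$ of the form (projection onto $W$ along $Y_0$) composed with a surjection from~$X$ onto~$W$ — which exists because $W$, being a quotient of a finite-codimensional subspace of~$Y$, is a quotient of~$X$ when $\Phi_-(X,Y)\ne\emptyset$ — arranged so that $Q_{Y_0}(T+K) = Q_{Y_0}T + Q_{Y_0}K$ fails to be surjective because the $Y_0$-component of $T+K$ lands inside a proper subspace; concretely, choosing $T$ so that $Q_{Y_0}T = -Q_{Y_0}K$ forces $R(T+K)\subseteq Y_0$, hence $T+K\notin\Phi_-$.

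For the case where $Y_0\cap L$ is large (the analogue of case~(b)), I pass to $Y_0 + L$ or work modulo it: since $Q_{Y_0}K$ is onto, I can compose with a projection to reduce to an operator into~$L$, observe that $\Phi_-(X,L)$ is nonempty and $L$ is superprojective, invoke Theorem~\ref{prev-} to get that this operator is not in $P\Phi_-(X,L)$, and then push back up to~$Y$ using the dual of Proposition~\ref{aux}(b). This is exactly where Lemma~\ref{aux-2} enters: it is the tool that lets me transfer "$Q_{Y_0}K$ surjective, $N\subseteq E$" into the existence of a closed infinite-codimensional $F\supseteq Y_0$ with $E = K^{-1}(F)$, so that the codimension bookkeeping goes through cleanly when restricting and corestricting. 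For the remaining "not closed" case (analogue of case~(c)), I would appeal to the compact-perturbation result \cite[Theorem~2.6 or the dual statement]{gonzalez-martinez-salas} — or its strictly cosingular counterpart — to replace $K$ by $K + K_1$ with $K_1$ compact so that the sum falls into the large-intersection case, and conclude since $P\Phi_-$ is stable under compact perturbations in reverse.

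The main obstacle I anticipate is the bookkeeping of codimensions and the construction of the surjection onto the complemented piece~$W$: in the subprojective (upper) case one gets an isomorphic embedding of an infinite-codimensional subspace~$M$ into~$L$ essentially for free from a dimension count, whereas here I need a \emph{surjection} from~$X$ (or a finite-codimensional subspace of~$X$) onto the infinite-dimensional complemented subspace~$W\subseteq Y$, and producing this requires knowing that $W$ is a quotient of~$X$, which follows from $\Phi_-(X,Y)\ne\emptyset$ together with $W$ being complemented in~$Y$, but the matching-up with the decomposition $Y = Y_0\oplus W$ and ensuring the resulting $T$ is genuinely lower semi-Fredholm (finite-codimensional range) rather than merely having dense range is the delicate point. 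The superprojectivity of~$Y$ is used precisely to arrange that $Y_0$ sits inside a \emph{complemented} infinite-codimensional subspace, which is what makes the whole construction possible.
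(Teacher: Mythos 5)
Your overall plan---a three-way case division, superprojectivity of $Y$ used to replace $Y_0$ by a complemented subspace, Lemma~\ref{aux-2} to enlarge $Y_0$, Theorem~\ref{prev-} in the ``large intersection'' case, and a compact perturbation for the non-closed case---matches the paper's, but three of the steps as you describe them do not work. First, the case division cannot be made on the relative position of $Y_0$ and $L$ inside $Y$: since $L$ is finite-codimensional in $Y$, the sum $Y_0+L$ is always closed and finite-codimensional and $Y_0\cap L$ is always finite-codimensional in $Y_0$, so that division is vacuous. The correct dual of the case analysis in Theorem~\ref{main+} takes place in $X$, comparing $K^{-1}(Y_0)$ with the subspace $N$ for which $X/N$ is isomorphic to a finite-codimensional subspace of $Y$: the cases are whether $K^{-1}(Y_0)+N$ is finite-codimensional (hence closed), whether its closure is infinite-codimensional, or neither.

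Second, the operator $T$ you build in the main case is not lower semi-Fredholm: a surjection of $X$ onto $W$ composed with the projection onto $W$ has range contained in $W$ and so misses all of $Y_0$, and imposing $Q_{Y_0}T=-Q_{Y_0}K$ only pins down the $W$-component of $T$; you still need the $Y_0$-component of $T$ to cover $Y_0$. The paper takes $T=SQ_N-PK$ with $\map{S}{X/N}{Y_0}$ a surjection onto $Y_0$ (not onto $W$) and $P$ the projection with kernel $Y_0$, and the verification that $R(T)=Y$ uses precisely the case-(a) normalisation $K^{-1}(Y_0)+N=X$, because $PK$ vanishes on $K^{-1}(Y_0)$ while $SQ_N$ vanishes on $N$. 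You flag this as ``the delicate point'' but leave it unresolved, and it cannot be resolved without the case condition you omitted. Third, in your analogue of case (b) you corestrict to $L$ and invoke Theorem~\ref{prev-} for $\Phi_-(X,L)$ on the grounds that $L$ is superprojective; but Theorem~\ref{prev-} requires the \emph{source} to be superprojective, and $X$ need not be. The correct move is the opposite one: enlarge $Y_0$ via Lemma~\ref{aux-2} so that $N\subseteq K^{-1}(Y_0)$, factor $PK$ through the quotient $X/N$, and apply Theorem~\ref{prev-} to the resulting operator from $X/N$ to $Y$, whose source $X/N$ is superprojective because it is isomorphic to a finite-codimensional subspace of the superprojective space $Y$.
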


\begin{proof}
Since $\Phi_-(X,Y)$ is not empty, $X$ must contain some closed subspace~$N$
such that $X/N$ is isomorphic to a finite-codimensional subspace of~$Y$;
in particular, $X/N$ must be superprojective.

Let $K \in \LL(X,Y) \setminus \SC(X,Y)$; we have to show that $K \notin
P\Phi_-(X,Y)$. Since $K$ is not strictly cosingular, there exists a closed,
infinite-codimen\-sional subspace $Z \subset Y$ such that $Q_Z K$ is
surjective, where $Q_Z$ is the natural quotient operator from~$Y$
onto $Y/Z$; note that this means that $R(K) + Z = Y$.
Considering the relative positions of the subspaces $K^{-1}(Z)$ and $N$
inside~$X$, three cases may happen:
\begin{itemize}
\item[(a)] $K^{-1}(Z) + N$ is finite-codimensional in~$X$, hence closed;
\item[(b)] $\overlinet{K^{-1}(Z) + N}$ is infinite-codimensional in~$X$;
\item[(c)] $\overlinet{K^{-1}(Z) + N}$ is finite-codimensional in~$X$ but
$K^{-1}(Z) + N$ is not closed.
\end{itemize}

(a)~If $K^{-1}(Z) + N$ is finite-codimensional in~$X$, hence closed, we can
assume that $K^{-1}(Z) + N = X$ by passing to a larger~$Z$ if necessary.
Since $Y$ is superprojective, we may further assume that $Z$ is complemented
in~$Y$, so there exists a projection $\map{P}{Y}{Y}$ with kernel
$N(P) = Z$, for which $R(P) = R(PK) = PK(N)$.
Also, as $Z$ is an infinite-codimensional complemented subspace of~$Y$ and
$X/N$ is isomorphic to a finite-codimensional subspace of~$Y$,
there exists a surjection $\map{S}{X/N}{Z}$, so $Z = R(SQ_N) =
SQ_N \bigl( K^{-1}(Z) \bigr)$.
Define an operator $\map{T}{X}{Y}$ as $T = SQ_N - PK$; then
$N \subseteq N(SQ_N)$ and $N(PK) = K^{-1} \bigl( N(P) \bigr) = K^{-1}(Z)$ so
$$\vcenter{\openup\jot\ialign{\hfil\strut$#$&${}#$\hfil\cr
    R(T) &= (SQ_N - PK) \bigl( K^{-1}(Z) + N \bigr) \cr
      &= SQ_N \bigl( K^{-1}(Z) \bigr) + PK(N) = Z + R(P) = Y, \cr
}}$$
hence $T$ is surjective and  $T \in \Phi_-(X,Y)$. However, $T + K =
SQ_N + (I_Y-P)K$, so $R(T+K) \subseteq Z$ and $T+K \notin \Phi_-(X,Y)$,
which proves that $K \notin P\Phi_-(X,Y)$.

(b)~If $\overlinet{K^{-1}(Z) + N}$ is infinite-codimensional in~$X$, we can
assume that $N \subseteq K^{-1}(Z)$ by passing to a larger~$Z$ if necessary
using Lemma~\ref{aux-2} and, since $Y$ is superprojective, we may further
assume that $Z$ is complemented in~$Y$, so there exists a projection
$\map{P}{Y}{Y}$ with kernel $N(P) = Z$.
As in the previous case, $R(PK) = R(P)$, so $PK \notin \SC(X,Y)$.
Furthermore, $N \subseteq K^{-1}(Z) = N(PK)$, so $PK$ factors through $X/N$
and there exists an operator $\map{T}{X/N}{Y}$ such that
$PK = T Q_N$, where $T \notin \SC(X/N,Y)$ because
$PK \notin \SC(X,Y)$ and $\SC$ is a surjective operator ideal~\cite{pietsch}.
As such, since $\Phi_-(X/N,Y)$ is not empty
and $X/N$ is superprojective, it follows that $T \notin
P\Phi_-(X/N,Y)$ by Theorem~\ref{prev-} and there exists an operator
$S \in \Phi_-(X/N,Y)$ such that $S + T \notin \Phi_-(X/N,Y)$,
for which
$$(S+T)Q_N = SQ_N + PK \notin \Phi_-(X,Y)$$
while $SQ_N \in \Phi_-(X,Y)$, so $PK \notin P\Phi_-(X,Y)$ and
$K \notin P\Phi_-(X,Y)$ by Proposition~\ref{aux}.

(c) If $\overlinet{K^{-1}(Z) + N}$ is finite-codimensional in~$X$ but
$K^{-1}(Z) + N$ is not closed, there exists a compact operator
$\map{K_1}{X}{Y}$ such that $\overline{(K + K_1)^{-1}(Z) + N}$ is
infinite-codimensional in~$X$ \cite[Theorem 3.7]{gonzalez-martinez-salas},
and then it follows that $K + K_1 \notin P\Phi_-(X,Y)$ from case~(b) and
finally that $K \notin P\Phi_-(X,Y)$.

\end{proof}

\end{section}


\begin{thebibliography}{00}

\bibitem{aiena}
P.\ Aiena.
\emph{Fredholm and local spectral theory, with applications to multipliers.}
Kluwer Acad.\ Publ., Dordrecht, 2004.

\bibitem{aiena-gonzalez-1998}
P.\ Aiena; M.\ Gonz\'alez.
\emph{On inessential and improjective operators.}
Studia Math. 131 (3) (1998) 271--287.

\bibitem{aiena-gonzalez-2002}
P.\ Aiena; M.\ Gonz\'alez.
\emph{Inessential operators between Banach spaces.}
Rend.\ Circ.\ Mat.\ Palermo (2) Suppl.\ 68 (2002) 3--26.

\bibitem{galego-gonzalez-pello}
E.~M.\ Galego; M.\ Gonz\'alez; J.\ Pello.
On subprojectivity and superprojectivity of Banach spaces.
Results Math.\ 71 (3-4) (2017) 1191--1205.

\bibitem{gohberg-markus-feldman}
I.~C.\ Gohberg; A.~S.\ Markus; I.~A.\ Feldman.
\emph{Normally solvable operators and ideals associated with them.}
Bul.\ Akad.\ \v Stiince RSS Moldoven 10 (76) (1960) 51--70.
Translation:
Amer.\ Math.\ Soc.\ Transl.\ (2) 61 (1967) 63--84.

\bibitem{gonzalez}
M.\ Gonz\'alez.
\emph{The perturbation classes problem in Fredholm theory.}
J.\ Funct.\ Anal.\ 200 (1) (2003) 65--70.

\bibitem{gonzalez-martinez-salas}
M.\ Gonz\'alez; A.\ Mart\'\i nez-Abej\'on; M.\ Salas-Brown.
\emph{Perturbation classes for semi-Fredholm operators
  on subprojective and superprojective spaces.}
Ann.\ Acad.\ Sci.\ Fenn.\ Math.\ 36 (2011) 481--491.

\bibitem{gonzalez-pello}
M.\ Gonz\'alez; J.\ Pello.
Superprojective Banach spaces.
J.\ Math.\ Anal.\ Appl.\ 437 (2) (2016) 1140--1151.

\bibitem{kato}
T.\ Kato.
\emph{Perturbation theory for nullity, deficiency and other quantities
  of linear operators.}
J.~Anal.\ Math.\ 6 (1958) 261--322.

\bibitem{lebow-schechter}
A.\ Lebow; M.\ Schechter.
\emph{Semigroups of operators and measures of noncompactness.}
J.\ Funct.\ Anal.\ 7 (1) (1971) 1--26.

\bibitem{oikhberg-spinu}
T.\ Oikhberg; E.\ Spinu.
\emph{Subprojective Banach spaces.}
J.\ Math.\ Anal.\ Appl.\ 424 (1) (2015) 613--635.

\bibitem{pietsch}
A.\ Pietsch.
\emph{Operator ideals.}
North-Holland Mathematical Library, 20.
North-Holland, 1980.

\bibitem{tylli}
H.~O.\ Tylli.
\emph{Lifting non-topological divisors of zero modulo the compact operators.}
J.\ Funct.\ Anal.\ 125 (2) (1994) 389--415.

\bibitem{vladimirskii}
Ju.\ N.\ Vladimirski\u\i.
\emph{Strictly cosingular operators.}
Soviet Math.\ Dokl.\ 8 (1967), 739--740.

\bibitem{weis}
L.\ Weis.
\emph{On perturbations of Fredholm operators in $L_p(\mu)$-spaces.}
Proc.\ Amer.\ Math.\ Soc.\ 67 (2) (1977) 287--292.

\bibitem{whitley}
R.\ J.\ Whitley.
\emph{Strictly singular operators and their conjugates.}
Trans.\ Amer.\ Math.\ Soc.\ 113 (2) (1964) 252--261.

\end{thebibliography}
\end{document}